\title{Directed Simplices In Higher Order Tournaments}
\author{Imre Leader \thanks{Department of Pure Mathematics and Mathematical Statistics, Centre for Mathematical Sciences, University of Cambridge, Wilberforce Road, Cambridge CB3 0WB, United Kingdom. Email: I.Leader@dpmms.cam.ac.uk.} \and Ta Sheng Tan \thanks{Department of Pure Mathematics and Mathematical Statistics, Centre for Mathematical Sciences, University of Cambridge, Wilberforce Road, Cambridge CB3 0WB, United Kingdom. Email: T.S.Tan@dpmms.cam.ac.uk.}}
\newtheorem{thm}{Theorem}[section]
\newtheorem{lemma}[thm]{Lemma}
\newtheorem{cor}[thm]{Corollary}
\newtheorem{qs}{Question}
\theoremstyle{remark} \newtheorem*{remark}{Remark}
\begin{document}

\maketitle

\begin{abstract}
It is well known that a tournament (complete oriented graph) on $n$ vertices has at most $\frac{1}{4}\binom{n}{3}$ directed triangles, and that the constant $\frac{1}{4}$ is best possible. Motivated by some geometric considerations, our aim in this paper is to consider some `higher order' versions of this statement. For example, if we give each $3$-set from an $n$-set a cyclic ordering, then what is the greatest number of `directed $4$-sets' we can have? We give an asymptotically best possible answer to this question, and give bounds in the general case when we orient each $d$-set from an $n$-set.
\end{abstract}

\section{Introduction}

A \textit{tournament} is a complete graph in which each edge is assigned a direction. It is well known (see e.g. \cite{moon}) that there are at most $\frac{1}{4}\binom{n}{3}+O(n^2)$ directed triangles in a tournament on $n$ vertices. The constant $\frac{1}{4}$ is easily seen to be best possible, since for example the random tournament (where the direction of each edge is assigned randomly and independently with probability $\frac{1}{2}$) has expected number of directed triangles precisely $\frac{1}{4}\binom{n}{3}$. Actually, any tournament in which all degrees are close to $\frac{n}{2}$ will have about this number of directed triangles (see e.g. \cite{moon}).
\\\\
Our aim in this paper is to investigate some `higher order' analogues of this result. Before we make our definitions, we give some geometric background, to explain how the question arose. However, our question is natural even without any motivation, so the reader who is not interested in geometric considerations should feel free to skip the next few paragraphs.
\\\\
Let $T \subset \mathbb{R}^d$ be a set of $n$ points in general position. What is the greatest possible number of $d$-simplices of $T$ that contain (say in their interior) a given point of $\mathbb{R}^d$? In two dimensions, this question was asked by K\'arteszi \cite{karteszi} and answered by Boros and F\"uredi \cite{boros1,boros2}, who showed that for any set $T$ of $n$ points in the plane in general position and any point $x$ the number of triangles of $T$ containing $x$ is at most $\frac{1}{4}\binom{n}{3}+O(n^2)$. (Note that this can be attained, for example by taking $T$ to be a regular $n$-gon and $x$ its centre). Their elegant proof was to note that there is a natural way to make $T$ into a tournament: given $a$ and $b$ in $T$, direct the edge $ab$ from $a$ to $b$ (respectively from $b$ to $a$) in such a way that the triple $abx$ (respectively $bax$) is clockwise. Then the triangles of $T$ containing $x$ correspond precisely to the directed triangles of this tournament. 
\\\\
In this paper we are usually interested in asymptotic bounds, but we remark in passing that Boros and F\"uredi actually proved the exact best possible bound on the number of triangles, because the exact tournament bound (namely $\frac{1}{24}(n^3-n)$ if $n$ is odd and $\frac{1}{24}(n^3-4n)$ if $n$ is even) can in fact be realised geometrically. Indeed, the above construction, with $x$ moved slightly so as not to be collinear with any pair from $T$, achieves this value.
\\\\
The general question (in $d$ dimensions) was asked by Boros and F\"uredi, and answered by B\'ar\'any \cite{barany}. He showed that if $T \subset \mathbb{R}^d$ is a set of $n$ points in general position and $x$ is any point then the number of $d$-simplices of $T$ containing $x$ is at most $\frac{1}{2^d} \binom{n}{d+1} + O(n^d)$. The constant $\frac{1}{2^d}$ is best possible, as may be seen in \cite{barany}. 
\\\\
(The reader may like to note that, while the above is in some sense about the `best' sets $T$, it is also very natural to ask about the `worst' sets. Thus for example Boros and F\"uredi \cite{boros2} (see also Bukh \cite{bukh1}) showed that for any set $T$ of $n$ points in general position there is a point in at least $\frac{2}{9}$ of its triangles, and the constant $\frac{2}{9}$ cannot be improved. In $d$ dimensions, the right constant is not known: the best current bounds are a lower bound of $\frac{1}{(d+1)^d}$ by B\'ar\'any \cite{barany}, improved slightly to $\frac{d^2+1}{(d+1)^{d+1}}$ by Wagner \cite{wagner}, and an upper bound of $\frac{d!}{(d+1)^d}$ by Bukh, Matou\v sek and Nivasch \cite{bukh2}.)
\\\\
Now, B\'ar\'any's result uses the Upper Bound Theorem \cite{mullen} (about facet counts in polytopes). In other words, it uses a geometric theorem, as opposed to the abstract tournament theorem used by Boros and F\"uredi. But what would the corresponding abstract result be? Just as in the case $d=2$, for a general $d$ we would give an orientation to each $d$-set ($(d-1)$-simplex) in $T$, according to `on which side of it' the point $x$ lies. And then the $d$-simplices containing $x$ would correspond exactly to the $(d+1)$-sets in $T$ whose $d$-sets were `oriented compatibly' (in other words, whose $d$-sets had orientations that could be induced from a fixed orientation of the $(d+1)$-set -- this will be made more precise in a moment). Hence our abstract question is as follows: suppose that we orient (in some sense) every $d$-set of an $n$-set; what is the greatest number of directed $(d+1)$-sets that arise? In particular, do we get as small a bound as $\frac{1}{2^d} \binom{n}{d+1} + O(n^d)$? 
\\\\
We now give the precise (and non-geometric) definitions. We define an \textit{orientation} of a $d$-set inductively. An orientation of a 1-set $\{x\}$ is just an assignment of $\pm1$ to $x$, and an orientation of a 2-set $\{a,b\}$ is a directed edge from $a$ to $b$ or vice versa. (We may, if we wish, think of a directed edge from $a$ to $b$ as assigning $+1$ to $b$ and $-1$ to $a$). And for $d\geq3$, an orientation of a $d$-set consists of an orientation for each of its $(d-1)$-subsets in such a way that these orientations are \textit{compatible}, meaning that any two give different orientations to their common $(d-2)$-subset. Then, for $d \geq 2$, a \textit{tournament of order $d$}, or \textit{$d$-tournament}, consists of a set together with an orientation of each of its $d$-sets. Finally, in a $d$-tournament a \textit{$d$-simplex} is a $(d+1)$-set, and we say that it is \textit{directed} if its $d$-subsets are pairwise compatible.
\\\\
For example, a 2-tournament is just a tournament, and its directed $3$-sets are precisely its directed triples in the usual sense. And a 3-tournament is specified by giving each 3-set (from a given set) a cyclic ordering: then a 4-set is directed if any two of its 3-sets have cyclic orderings that go in opposite directions on their common 2-set.
\\\\
Our question is then: what is the greatest number of directed $(d+1)$-sets for a $d$-tournament on $n$ vertices? For $d=2$ this is $\frac{1}{4}\binom{n}{3}+O(n^2)$; what can we say in general? And how does this bound compare with the `geometric' version (when the $d$-tournament is induced from a set $T$ in $\mathbb{R}^d$), where the bound is $\frac{1}{2^d} \binom{n}{d+1} + O(n^d)$? 
\\\\
To put it another way, define the constant $c_d$ to be the limit, as $n \rightarrow \infty$, of this greatest number as a fraction of $\binom{n}{d+1}$ -- an easy averaging argument shows that the limit does exist. In this language, the $d=2$ result is that $c_2=\frac{1}{4}$, and the geometric construction shows that $c_d \geq \frac{1}{2^d}$. In fact, another reason why it is obvious that $c_d \geq \frac{1}{2^d}$ is that a random $d$-tournament has expected number of directed $(d+1)$-sets exactly $\frac{1}{2^d}\binom{n}{d+1}$. How does $c_d$ behave, for fixed small $d$ and also as $d$ gets large?
\\\\
The plan of the paper is as follows. We start by considering the case $d=3$. Here it turns out that $\frac{1}{8}$ is not the right answer. We give an upper bound of $\frac{1}{4}$, by a simple counting argument. And then we show that that in fact $c_3=\frac{1}{4}$, by a slightly unexpected random argument. This is the content of Section 2.
\\\\
Then in Section 3 we turn our attention to general $d$. Here we do not know what the exact value of $c_d$ is. We give an upper bound of $\frac{1}{d+1}$, again by a simple counting argument. For the lower bound, the method for $d=3$ seems unfortunately not to generalise, and indeed we do not know how to use any random methods to improve significantly on $\frac{1}{2^d}$. However, we give an explicit construction to show that $c_d \geq \frac{1}{d^2}$. Thus the abstract version of the problem exhibits genuinely different behaviour to the geometric version.
\\\\
In Section 4, we give some remarks and open questions.

\section{Directed Tetrahedra in Tournaments of Order 3}

In this section, we determine $c_3$. In a 3-tournament, we call a 3-set a \textit{triangle} and a 3-simplex (or a 4-set) a \textit{tetrahedron}. So a directed tetrahedron is simply a directed 4-set. 
\\\\
Given a triangle $\triangle =\{a,b,c\}$, it can be oriented (in a 3-tournament) either as

\setlength{\unitlength}{0.8cm}
\begin{picture}(10,3)
  \thicklines
  \put(1,0.5){\line(1,0){2}}
  \put(2,2.5){\line(-1,-2){1}}
  \put(2,2.5){\line(1,-2){1}}
  \put(1.9,2.6){$a$}
  \put(0.7,0.4){$c$}
  \put(3.1,0.4){$b$}
  \put(5,1.4){\text{or}}
  \put(7,0.5){\line(1,0){2}}
  \put(8,2.5){\line(-1,-2){1}}
  \put(8,2.5){\line(1,-2){1}}
  \put(7.9,2.6){$a$}
  \put(6.7,0.4){$c$}
  \put(9.1,0.4){$b$}
  \thinlines
  \put(2.65,0.65){\vector(-1,0){1.3}}
  \put(2.05,2.1){\vector(1,-2){0.6}}
  \put(1.3,0.8){\vector(1,2){0.6}}

  \put(7.35,0.65){\vector(1,0){1.3}}
  \put(8.65,0.8){\vector(-1,2){0.6}}
  \put(7.95,2){\vector(-1,-2){0.6}}
\end{picture}
\\
We write $\overrightarrow{abc}$ $\left(= \overrightarrow{bca} \text{ or } \overrightarrow{cab}\right )$ for the former and $\overrightarrow{acb}$ $\left(= \overrightarrow{bac} \text{ or } \overrightarrow{cba}\right )$ for the latter. Thus, in this language, a directed tetrahedron $\{a,b,c,d\}$ in a 3-tournament has the orientations of its triangles as $\left\{\overrightarrow{abc},\overrightarrow{adb},\overrightarrow{acd},\overrightarrow{bdc}\right\}$ or $\left\{\overrightarrow{acb},\overrightarrow{abd},\overrightarrow{adc},\overrightarrow{bcd}\right\}$.
\\\\
It is easy to check (by hand) that in a tetrahedron there are at least 2 compatible pairs of triangles.

\begin{thm}\label{uppertetrahedron}
Let $T_3$ be a 3-tournament on $n$ vertices. Then the number of directed tetrahedra in $T_3$ is at most $\frac{1}{4}\binom{n}{4}+O(n^3)$.
\end{thm}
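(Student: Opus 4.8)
The plan is to prove the bound by double-counting \emph{compatible pairs} of triangles, where a compatible pair means two triangles that share a common edge and induce opposite orientations on it. The key point is that this single quantity can be counted in two ways: once organised around edges, which yields a clean upper bound, and once organised around $4$-sets, which brings in the number of directed tetrahedra. Note first that two triangles sharing an edge share exactly two vertices, and so each compatible pair lies in a unique edge and in a unique $4$-set (the union of the two triangles); this is what makes both counts count the same number $C$.

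For the edge count I would fix an edge $\{x,y\}$. Each of the $n-2$ remaining vertices $z$ gives a triangle $\{x,y,z\}$ whose orientation induces either $x\to y$ or $y\to x$ on $\{x,y\}$; say $p$ of them induce $x\to y$ and $q=n-2-p$ induce $y\to x$. A pair of triangles through $\{x,y\}$ is compatible exactly when the two induce opposite orientations, so the number of compatible pairs on this edge is $pq\le \frac{(n-2)^2}{4}$. Summing over all $\binom{n}{2}$ edges gives $C\le \binom{n}{2}\frac{(n-2)^2}{4}$.

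For the tetrahedron count, every $4$-set contains $4$ triangles and hence $\binom{4}{2}=6$ pairs of triangles, each sharing exactly one edge. By definition a directed tetrahedron is a $4$-set in which all $6$ of these pairs are compatible, whereas by the observation recorded just before the theorem every $4$-set has at least $2$ compatible pairs. Writing $D$ for the number of directed tetrahedra and $c(S)$ for the number of compatible pairs in a $4$-set $S$, I therefore get $C=\sum_S c(S)\ge 6D+2\bigl(\binom{n}{4}-D\bigr)=4D+2\binom{n}{4}$.

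Combining the two estimates gives $4D+2\binom{n}{4}\le \binom{n}{2}\frac{(n-2)^2}{4}$. Since the right-hand side is $\frac{n^4}{8}+O(n^3)$ and $2\binom{n}{4}=\frac{n^4}{12}+O(n^3)$, I obtain $4D\le\bigl(\frac{1}{8}-\frac{1}{12}\bigr)n^4+O(n^3)=\frac{n^4}{24}+O(n^3)$, i.e. $D\le\frac{1}{4}\binom{n}{4}+O(n^3)$, as wanted. The step I expect to be essential for getting the right constant is the use of the ``at least $2$ compatible pairs'' fact: if one used only the trivial bound $C\ge 6D$, the same computation would give merely $\frac{1}{2}\binom{n}{4}$, so it is precisely the saving of $2\binom{n}{4}$ that sharpens the constant to $\frac{1}{4}$. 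The edge-counting inequality $pq\le\frac{(n-2)^2}{4}$ and the ``$\ge 2$'' lemma are the two ingredients that have to be combined in exactly the right proportions.
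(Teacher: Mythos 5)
Your proposal is correct and is essentially the paper's own proof: the same double count of compatible pairs, bounded above edge-by-edge via $pq\le\frac{(n-2)^2}{4}$ (the paper writes $\lfloor\frac{n-2}{2}\rfloor\lceil\frac{n-2}{2}\rceil$) and below via $6$ compatible pairs per directed tetrahedron plus the ``at least $2$'' fact for the rest. Your closing remark about why the $2\binom{n}{4}$ term is what sharpens $\frac12$ to $\frac14$ is a nice observation, but the argument itself matches the paper's.
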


\begin{proof}
Let $X$ be the number of directed tetrahedra in $T_3$. For each directed tetrahedron, there are 6 compatible pairs of triangles. We also know that there are at least 2 compatible pairs of triangles in each of the non-directed tetrahedron. Therefore, there are at least $6X+2\left(\binom{n}{4}-X\right)$ compatible pairs of triangles in $T_3$.
\\\\
On the other hand, consider any 2-set $\{a,b\}$ and count the number of compatible pairs of triangles having $\{a,b\}$ as their common 2-subset. By considering the orientation of $\{a,b,x\}$ for each $x\notin \{a.b\}$, it is easy to see that this number is at most $\lfloor\frac{n-2}{2}\rfloor\lceil\frac{n-2}{2}\rceil$. Therefore, there are at most $\lfloor\frac{n-2}{2}\rfloor\lceil\frac{n-2}{2}\rceil \binom{n}{2}$ compatible pairs of triangles in $T_3$.
\\\\
Putting the two bounds together, we have 
\begin{align*}
6X+2\left(\binom{n}{4}-X\right) &\leq \Big\lfloor\frac{n-2}{2}\Big\rfloor\Big\lceil\frac{n-2}{2}\Big\rceil \binom{n}{2}\\
\text{so } \qquad \qquad                         X &\leq \frac{1}{4}\binom{n}{4}+O(n^3).
\end{align*}
\end{proof}

\noindent A natural guess for $c_3$ would be $\frac{1}{8}$. This is from looking at the geometric version of the problem or by assigning the orientation of each triangle randomly and independently with probability $\frac{1}{2}$. But surprisingly this is not the case, as shown in the following construction.
\\\\
The idea is to construct a 3-tournament $T_3$ by inducing it in a certain way from a random 2-tournament. We will show that there is a way of inducing which gives $T_3$ many directed tetrahedra.
\\\\
Suppose $T_2$ is a 2-tournament on $n$ vertices. Let $T_3$ be a 3-tournament with vertex set $V(T_2)$. Note that there are only two types of 2-tournament on three vertices; either a directed triangle or a transitive 3-set. Let $\triangle=\{a,b,c\}$ be a triangle. If $\triangle$ is a directed triangle (in $T_2$) with directed edges $\{a\rightarrow b, b\rightarrow c, c\rightarrow a\}$, we will, in $T_3$, orient $\triangle$ the ``same'' way, $\overrightarrow{abc}$. If $\triangle$ is a transitive 3-set with directed edges $\{a\rightarrow b, b\rightarrow c, a\rightarrow c\}$, we could choose to orient $\triangle$ following the ``minority'', $\overrightarrow{acb}$, or the ``majority'', $\overrightarrow{abc}$. It turns out that it is better to orient the transitive 3-sets following the ``minority''.

\begin{thm} \label{lowertetrahedron}
For $n\geq 4$, there is a 3-tournament on $n$ vertices that has at least $\frac{1}{4}\binom{n}{4}$ directed tetrahedra.
\end{thm}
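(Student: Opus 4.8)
The plan is to realise the construction described just above by the probabilistic method: take $T_2$ to be a uniformly random $2$-tournament on $n$ vertices (each edge oriented independently with probability $\frac12$), induce $T_3$ by the minority rule, and show that the expected number of directed tetrahedra of $T_3$ is exactly $\frac14\binom{n}{4}$. Since some outcome of a random variable is always at least its mean, this immediately produces a $3$-tournament with at least $\frac14\binom{n}{4}$ directed tetrahedra, which is what we want.

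By linearity of expectation it suffices to compute, for a fixed $4$-set $\{a,b,c,d\}$, the probability $p$ that it becomes a directed tetrahedron in $T_3$; the expected number of directed tetrahedra is then $p\binom{n}{4}$, and I would aim to show $p=\frac14$. The first observation is that whether $\{a,b,c,d\}$ is directed depends only on the orientations of the six edges of $T_2$ inside this set, since the induced orientation of each of the four triangles depends only on the three $T_2$-edges it contains. Thus $p=N/64$, where $N$ is the number of the $2^6=64$ tournaments on $\{a,b,c,d\}$ whose induced tetrahedron is directed.

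The key simplification is that directedness is invariant under relabelling: the minority rule commutes with permutations of the vertices (it is defined purely in terms of whether a triangle is cyclic or transitive, and of its source, middle and sink), and pairwise compatibility of the four triangle-orientations is itself a label-free condition. Hence whether the induced tetrahedron is directed depends only on the isomorphism type of $T_2$ on the four vertices, and I can work through the four isomorphism classes of tournaments on four vertices: the transitive one (score sequence $(0,1,2,3)$, with $24$ labellings), the two types built from a dominating or a dominated vertex together with a $3$-cycle (scores $(1,1,1,3)$ and $(0,2,2,2)$, with $8$ labellings each), and the unique strongly connected type (score $(1,1,2,2)$, with $24$ labellings). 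For each class I would fix a representative, compute the induced orientation of all four triangles, and test whether the resulting pattern is one of the two directed patterns $\left\{\overrightarrow{abc},\overrightarrow{adb},\overrightarrow{acd},\overrightarrow{bdc}\right\}$ or $\left\{\overrightarrow{acb},\overrightarrow{abd},\overrightarrow{adc},\overrightarrow{bcd}\right\}$ (equivalently, that on each of the six edges the two triangles containing it induce opposite orientations). The calculation should show that exactly the two ``vertex plus $3$-cycle'' classes give directed tetrahedra, so that $N=8+8=16$ and $p=\frac{16}{64}=\frac14$, as required.

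The main obstacle is this case analysis, and in particular getting the minority rule to interact correctly with the directedness condition. It is instructive, and reassuring, that the same computation with the majority rule fails: a quick check shows that the dominating-vertex class, which is directed under the minority rule, is \emph{not} directed under the majority rule, and this is precisely why the minority choice is the right one. I would therefore be careful to track, for each transitive triangle with source $s$, middle $m$ and sink $t$, that the induced orientation is the minority one (the cyclic order $s\rightarrow t\rightarrow m$) rather than the majority one (the order $s\rightarrow m\rightarrow t$), and to verify the two directed classes by hand. Everything else is routine: once $p=\frac14$ is established, linearity of expectation gives expected count $\frac14\binom{n}{4}$, and the probabilistic method completes the proof for every $n\geq 4$.
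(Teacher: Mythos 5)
Your proposal is correct and follows essentially the same route as the paper: a uniformly random $2$-tournament, the minority-rule induction, and a computation showing that exactly the $8+8=16$ labelled tournaments on four vertices consisting of a vertex dominating or dominated by a $3$-cycle induce directed tetrahedra, giving expectation $\frac{16}{64}\binom{n}{4}=\frac{1}{4}\binom{n}{4}$. Your organisation of the case check by isomorphism classes (score sequences $(0,1,2,3)$, $(1,1,1,3)$, $(0,2,2,2)$, $(1,1,2,2)$) is a clean way to carry out what the paper leaves as ``easy to check'', and the claimed outcome of that check is indeed correct.
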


\begin{proof}
Let $T_2$ be a random tournament on $n$ vertices where the direction of each edge is assigned independently with probability $\frac{1}{2}$. Let $T_3$ be the 3-tournament induced from $T_2$ where the orientation of each triangle is assigned according to the preceding discussion (that is, orient a directed triangle the ``same'' way and a transitive 3-set following the ``minority''). 
\\\\
Given a set of four vertices $D=\{a,b,c,d\}$, it is easy to check that $D$ is a directed tetrahedron in $T_3$ if and only if $D$ is one of the following two types of tournaments in $T_2$:
\begin{enumerate}[(i)]
\item a vertex dominating a directed triangle, $\{t \rightarrow x, t \rightarrow y, t \rightarrow z, x \rightarrow y, y \rightarrow z, z\rightarrow x \}$, or
\item a vertex dominated by a directed triangle, $\{x \rightarrow t, y \rightarrow t, z \rightarrow t, x \rightarrow y, y \rightarrow z, z\rightarrow x \}$,
\end{enumerate}
for $\{t,x,y,z\}=\{a,b,c,d\}$.
\\
Letting $X$ be the total number of type (i) and type (ii) tournaments in $T_2$, it follows that 
\[
\mathbb{E}(\text{number of directed tetrahedra in $T_3$})=\mathbb{E}(X).
\]
Now, there are $2^6=64$ different 2-tournaments on $\{1,2,3,4\}$, of which there are $8$ type (i) tournaments and 8 type (ii) tournaments. So,
\[ \mathbb{E}(X)=\frac{8+8}{64} \binom{n}{4} = \frac{1}{4} \binom{n}{4}.\]
Therefore, there exists a 2-tournament $T_2$ with at least $\frac{1}{4} \binom{n}{4}$ type (i) and type (ii) tournaments. Hence, there is a 3-tournament that has at least $\frac{1}{4}\binom{n}{4}$ directed tetrahedra.
\end{proof}

\noindent Putting Theorem~\ref{uppertetrahedron} and Theorem~\ref{lowertetrahedron} together, we have the following corollary.
\begin{cor}
$c_3=\frac{1}{4}.$ \qed
\end{cor}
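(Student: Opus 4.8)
The plan is simply to sandwich the relevant ratio between the two bounds we have already established. Recall that $c_3$ is defined as the limit, as $n \rightarrow \infty$, of $M(n)/\binom{n}{4}$, where $M(n)$ denotes the maximum number of directed tetrahedra taken over all 3-tournaments on $n$ vertices; the existence of this limit follows from the standard averaging argument mentioned in the introduction, so it remains only to identify its value.

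First I would invoke Theorem~\ref{lowertetrahedron}, which exhibits, for each $n \geq 4$, a 3-tournament with at least $\frac{1}{4}\binom{n}{4}$ directed tetrahedra. This gives $M(n) \geq \frac{1}{4}\binom{n}{4}$, and hence $M(n)/\binom{n}{4} \geq \frac{1}{4}$ for every $n$, so the limit is at least $\frac{1}{4}$. Next I would invoke Theorem~\ref{uppertetrahedron}, which bounds the number of directed tetrahedra in \emph{any} 3-tournament on $n$ vertices, and in particular bounds $M(n)$, by $\frac{1}{4}\binom{n}{4}+O(n^3)$. Dividing through by $\binom{n}{4}=\Theta(n^4)$, the error term contributes only $O(1/n)$, which tends to $0$; hence the limit is at most $\frac{1}{4}$. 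Combining the two inequalities forces the limit to equal $\frac{1}{4}$, that is, $c_3=\frac{1}{4}$.

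There is no genuine obstacle here: the mathematical content lies entirely in the two theorems, and the corollary is a one-line squeeze. The only point that deserves a moment's care is the bookkeeping that the lower-order term $O(n^3)$ is negligible against $\binom{n}{4}$ in the limit, so that the non-matching finite-$n$ bounds nonetheless pin down the same limiting constant. Since the gap between them is $O(n^3)/\binom{n}{4}\rightarrow 0$, this is immediate, and the result follows.
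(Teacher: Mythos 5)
Your proposal is correct and matches the paper exactly: the corollary is stated there as an immediate consequence of Theorem~\ref{uppertetrahedron} and Theorem~\ref{lowertetrahedron}, which is precisely the squeeze you carry out. Your extra care about the $O(n^3)/\binom{n}{4}\to 0$ bookkeeping is fine but not a point of divergence from the paper's (unwritten) argument.
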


\section{Tournaments of Higher Order}

In this section, we consider tournaments of order $d$ for a general $d\geq 2$.
\\\\
We first show that in a $d$-tournament, at most a fraction of $\frac{1}{d+1}$ of the $d$-simplices are directed. This uses a very similar counting argument as the case $d=3$.
\\\\
The following lemma gives a lower bound on the number of compatible pairs of $d$-sets in a $d$-simplex.

\begin{lemma} \label{thelemma}
Fix $d\geq 2$. Let $S$ be a $d$-simplex. Then $S$ has at least $s(d)$ compatible pairs of $d$-sets, where
\[
s(d)=\binom{\lfloor\frac{d+1}{2}\rfloor}{2}+\binom{\lceil\frac{d+1}{2}\rceil}{2}.
\]
\end{lemma}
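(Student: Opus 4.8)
The plan is to reduce the lemma to a one-line counting question about a balanced bipartition of the $d+1$ vertices of $S$. Write $S=\{v_0,v_1,\dots,v_d\}$ and let $A_i=S\setminus\{v_i\}$ be its $d+1$ many $d$-subsets, so that $A_i$ and $A_j$ (for $i\neq j$) meet in the $(d-1)$-set $C_{ij}=S\setminus\{v_i,v_j\}$, and the pair $\{A_i,A_j\}$ is compatible precisely when the orientations induced on $C_{ij}$ by $A_i$ and by $A_j$ are opposite. Everything will come down to encoding the orientation of each $A_i$ by a single sign and reading off compatibility from those signs.

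First I would record two facts coming straight from the inductive definition of orientation. Fact (i): every $d$-set has exactly two orientations, and these two are opposite, in the sense that they induce opposite orientations on each $(d-1)$-subset. Fact (ii): there is an orientation of the whole $(d+1)$-set $S$, that is, a choice of orientation $\omega_i$ for each $A_i$ which is pairwise compatible; this is just the statement that $(d+1)$-sets can be oriented (build one from a fixed linear order of the vertices), and it provides a reference configuration $(\omega_i)$ in which all $\binom{d+1}{2}$ pairs are compatible.

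The key step is to compare the actual orientation of each $A_i$ (the one given in the $d$-tournament) with the reference $\omega_i$, setting $\rho_i=+1$ if they agree and $\rho_i=-1$ if they are opposite. I claim that $\{A_i,A_j\}$ is compatible if and only if $\rho_i=\rho_j$. Indeed, by Fact (i) reversing the orientation of a single $d$-set reverses the orientation it induces on $C_{ij}$, and hence toggles the compatibility of the pair; since the reference pair $\{\omega_i,\omega_j\}$ is compatible, the actual pair is compatible exactly when an even number of the two flips (turning $\omega_i$ and $\omega_j$ into their actual orientations) is performed, i.e. exactly when $\rho_i\rho_j=+1$.

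Granting this, the proof finishes at once. Let $k=|\{i:\rho_i=+1\}|$; the compatible pairs are exactly the within-class pairs, so their number is $\binom{k}{2}+\binom{d+1-k}{2}$, which is minimised by taking $k$ as close to $\tfrac{d+1}{2}$ as possible, namely $k=\lfloor\tfrac{d+1}{2}\rfloor$, giving the value $s(d)$ (and incidentally showing the bound is tight). The step I expect to need the most care is the reduction itself: justifying Fact (i) from the inductive definition, so that the single-flip toggling argument is valid. Once the orientations are correctly encoded by the signs $\rho_i$, the remaining minimisation is immediate.
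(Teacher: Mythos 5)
Your proof is correct and is essentially the paper's own argument: your sign classes $\rho_i=\pm1$ relative to a reference directed orientation are exactly the paper's partition of the $d$-sets into those that must be switched and those that need not be, and both proofs conclude by counting within-class pairs $\binom{k}{2}+\binom{d+1-k}{2}$ and minimising at $k=\lfloor\frac{d+1}{2}\rfloor$. The only difference is presentational: you make explicit the two facts (each $d$-set has exactly two, mutually opposite, orientations; a pairwise compatible reference configuration exists) that the paper uses implicitly.
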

\begin{proof}
The $d$-simplex $S$ has $d+1$ oriented $d$-sets. Suppose that switching the orientations of $x$ $d$-sets turns $S$ into a directed $d$-simplex. Then we can partition the $d$-sets of $S$ as $S'\cup S''$, where $S'$ is the set of $d$-sets for which we need to switch orientations. Consider two $d$-sets in $S'$; switching the orientation of both the $d$-sets results in them being compatible by the definition of a directed simplex. Hence they must be compatible before switching. For a similar reason, a $d$-set in $S'$ and a $d$-set in $S''$ cannot be a compatible pair. Also, any pair of $d$-sets in $S''$ are compatible. So the number of compatible pairs of $d$-sets in $S$ is
\[ \binom{x}{2}+\binom{d+1-x}{2}\]
which is minimised at $x=\lfloor\frac{d+1}{2}\rfloor$.
\end{proof}
\noindent Based on this, we present an upper bound for $c_d$.

\begin{thm} \label{upperbound}
Fix $d\geq 2$. Let $T_d$ be a $d$-tournament on $n$ vertices. Then the number of directed $d$-simplices in $T_d$ is at most
\[
\left\{
\begin{array}{rl}
\frac{1}{d+1}\binom{n}{d+1}+O(n^d) & \text{if $d$ is odd}\\ \\
\frac{1}{d+2}\binom{n}{d+1}+O(n^d) & \text{if $d$ is even.}
\end{array} \right.
\]
In particular, 
\[
c_d\leq \left\{
\begin{array}{rl}
\frac{1}{d+1} & \text{if $d$ is odd}\\ \\
\frac{1}{d+2} & \text{if $d$ is even.}
\end{array} \right.
\]
\end{thm}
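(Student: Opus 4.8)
The plan is to mimic the double-counting argument of Theorem~\ref{uppertetrahedron}, now feeding in Lemma~\ref{thelemma} in place of the ad hoc ``at least $2$ compatible pairs'' bound. Write $X$ for the number of directed $d$-simplices, and count in two ways the total number of compatible pairs of $d$-sets, where a pair $\{A,B\}$ of $d$-sets with $|A\cap B|=d-1$ is \emph{compatible} if the orientations they induce on $A\cap B$ are opposite. The key bookkeeping observation is that such a pair determines, and lies in, a unique $(d+1)$-set $A\cup B$; so counting compatible pairs globally is exactly the same as summing, over all $d$-simplices, the number of compatible pairs among its $d+1$ facets.

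For the lower bound on this count, a directed $d$-simplex contributes all $\binom{d+1}{2}$ of its pairs, while every non-directed $d$-simplex contributes at least $s(d)$ by Lemma~\ref{thelemma}. Hence the total number of compatible pairs is at least $\binom{d+1}{2}X + s(d)\left(\binom{n}{d+1}-X\right)$. For the upper bound, I would group the compatible pairs by their common $(d-1)$-subset $F$: the $d$-sets containing $F$ are the $n-d+1$ sets $F\cup\{x\}$, each inducing one of the two orientations of $F$, and a pair is compatible precisely when these orientations differ, so the number of compatible pairs with intersection $F$ is at most $\lfloor\frac{n-d+1}{2}\rfloor\lceil\frac{n-d+1}{2}\rceil$. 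Summing over the $\binom{n}{d-1}$ choices of $F$ bounds the total by $\binom{n}{d-1}\lfloor\frac{n-d+1}{2}\rfloor\lceil\frac{n-d+1}{2}\rceil$, whose leading term is $\frac{d(d+1)}{4}\binom{n}{d+1}+O(n^d)$.

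Combining the two bounds and solving for $X$ gives $X\le \frac{\frac{d(d+1)}{4}-s(d)}{\binom{d+1}{2}-s(d)}\binom{n}{d+1}+O(n^d)$. The only place where genuine computation enters is the evaluation of $s(d)$ according to the parity of $d$: when $d$ is odd, $d+1$ is even and $s(d)=2\binom{(d+1)/2}{2}=\frac{d^2-1}{4}$, whereas when $d$ is even one gets $s(d)=\binom{d/2}{2}+\binom{(d+2)/2}{2}=\frac{d^2}{4}$. Substituting the odd case, the numerator becomes $\frac{d+1}{4}$ and the denominator $\frac{(d+1)^2}{4}$, so the coefficient collapses to $\frac{1}{d+1}$; substituting the even case, the numerator becomes $\frac{d}{4}$ and the denominator $\frac{d(d+2)}{4}$, giving $\frac{1}{d+2}$.

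I expect the main obstacle to be purely this parity arithmetic together with making sure the leading constant of the upper-bound count is pinned down correctly (checking that $\frac{d(d+1)}{4}\binom{n}{d+1}$ and $\binom{n}{d-1}\cdot\frac{n^2}{4}$ agree to leading order, both being $\frac{n^{d+1}}{4(d-1)!}+O(n^d)$, since $(d+1)!=(d+1)d\,(d-1)!$); the structural part of the argument is otherwise identical to the $d=3$ case and needs no new idea. The resulting bound on $c_d$ then follows by dividing through by $\binom{n}{d+1}$ and letting $n\to\infty$.
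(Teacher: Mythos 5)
Your proposal is correct and follows essentially the same argument as the paper: the same double count of compatible pairs of $d$-sets (lower-bounded via Lemma~\ref{thelemma} over all $d$-simplices, upper-bounded by grouping over common $(d-1)$-subsets), the same combination step, and the same parity computation of $s(d)$ yielding $\frac{1}{d+1}$ and $\frac{1}{d+2}$. The arithmetic in both parity cases checks out, so there is nothing to add.
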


\begin{proof}
Let $X$ be the number of directed $d$-simplices in $T_d$. For each directed $d$-simplex, there are $\binom{d+1}{2}$ compatible pairs of $d$-sets. By Lemma~\ref{thelemma}, the non-directed $d$-simplices each have at least $s(d)$ compatible pairs of $d$-sets. Therefore, there are at least 
\begin{equation}
\binom{d+1}{2}X + s(d)\left(\binom{n}{d+1}-X\right) \label{frombelow}
\end{equation}
compatible pairs of $d$-sets in $T_d$.
\\\\
On the other hand, let $A=\{a_1,a_2,\ldots,a_{d-1}\}$ be a set of $d-1$ vertices in $T_d$. We want to count the number of compatible pairs of $d$-sets that have $A$ as their common $(d-1)$-set. For $x\in T_d\setminus A$, the $d$-set $A\cup \{x\}$ is oriented one way or the other way. Let $h^+(A)$ be the number of vertices $x$ in $T_d\setminus A$ such that $A\cup\{x\}$ is oriented in one orientation and $h^-(A)$ is the number of vertices $y$ in $T_d\setminus A$ such that $A\cup\{y\}$ is oriented in the other orientation. The number of compatible pairs of $d$-sets that have $A$ as their common $(d-1)$-set is then $h^+(A)h^-(A)$, and as $h^+(A)+h^-(A)=n-d+1$ this value is maximised when $h^+(A)=\lfloor\frac{n-d+1}{2}\rfloor$. Therefore, there are at most
\begin{equation}
\binom{n}{d-1}\Big\lfloor\frac{n-d+1}{2}\Big\rfloor\Big\lceil\frac{n-d+1}{2}\Big\rceil \label{fromabove}
\end{equation}
compatible pairs of $d$-sets in $T_d$.
\\
Comparing \eqref{frombelow} and \eqref{fromabove}, we have
\begin{align*}
\binom{d+1}{2}X+s(d)\left(\binom{n}{d+1}-X\right)&\leq \binom{n}{d-1}\Big\lfloor\frac{n-d+1}{2}\Big\rfloor\Big\lceil\frac{n-d+1}{2}\Big\rceil \\
\text{so }\qquad X\left(\binom{d+1}{2}-s(d)\right)&\leq \left(\frac{d(d+1)}{4}-s(d)\right)\binom{n}{d+1}+O(n^d)\\
\text{so  }\quad \quad \qquad \qquad \qquad \qquad    X&\leq \left( \frac{\frac{d(d+1)}{4}-s(d)}{\frac{d(d+1)}{2}-s(d)}\right)\binom{n}{d+1}+O(n^d).
\end{align*}
When $d$ is odd, $s(d)=\frac{d+1}{2}\frac{d-1}{2}$ and 
\[ X\leq\frac{1}{d+1}\binom{n}{d+1}+O(n^d).\]
When $d$ is even, $s(d)=\frac{d^2}{4}$ and
\[ X\leq\frac{1}{d+2}\binom{n}{d+1}+O(n^d).\]
This completes the proof.
\end{proof}

\begin{remark}
From the proof above, the bound is tight only if there exists a $d$-tournament $T_d$ with the following properties.
\begin{enumerate}[(i)]
\item Almost all $d$-simplices in $T_d$ are either directed or have minimum number of compatible pairs of $d$-sets.
\item For almost all $(d-1)$-sets $A$, the number of compatible pairs of $d$-sets that have $A$ as their common $(d-1)$-set is about $\frac{n^2}{4}$.
\end{enumerate}
\end{remark}

\noindent Now we consider a lower bound on $c_d$.
\\\\
It is easy to see that a $d$-tournament where the orientation of each $d$-set is assigned randomly and independently with probability $\frac{1}{2}$ has expected number of directed $d$-simplices equal to $\frac{1}{2^d}\binom{n}{d+1}$. In other words, $c_d\geq \frac{1}{2^d}$. Unfortunately, we do not see how to generalise the construction for $d=3$ (inducing from a random 2-tournament) to obtain a significant improvement on $\frac{1}{2^d}$. In fact, we do not see how to use random methods to give a non-exponential lower bound on $c_d$. Instead, we give an explicit construction. Curiously, this construction gives $c_3\geq \frac{1}{7}$, which is worse than our previous method for $d=3$, and yet is much better in general.
\\\\
So, for the remainder of this section, we will give an explicit construction of a $d$-tournament that contains many directed $d$-simplices.

\begin{thm} \label{lowerbound}
Let $n=(d+1)^m$ where $m \in \mathbb{N}$ is sufficiently large. Then there is a $d$-tournament $T_d$ on $n$ vertices with at least 
\[ \frac{1}{1+\binom{d+1}{2}}\binom{n}{d+1}-o(n^{d+1}) \]
directed $d$-simplices. In particular, $c_d \geq \frac{1}{1+\binom{d+1}{2}}$.
\end{thm}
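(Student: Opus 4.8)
The plan is to give an explicit recursive (substitution) construction, which is exactly why the number of vertices is taken to be $n=(d+1)^m$. I identify the vertex set with $[d+1]^m$, ordered lexicographically, and build the $d$-tournament $T_d^{(m)}$ out of $d+1$ disjoint copies of $T_d^{(m-1)}$, one sitting in each block $B_v=\{v\}\times[d+1]^{m-1}$ ($v\in[d+1]$). The construction is thus determined once I say how to orient a $d$-set whose vertices do not all lie in one block, and the aim is to choose these cross-block orientations so as to create as many directed $d$-simplices as possible, while the intra-block simplices are handled by the recursion.

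It is convenient first to reformulate what ``directed'' means. Fixing the lexicographic order as a reference, an orientation of a $d$-set $F$ is just a sign $\sigma(F)\in\{\pm1\}$ (relative to the increasing orientation of $F$); conversely, any assignment of signs to the $d$-sets gives a legitimate $d$-tournament, a point that needs a short check that the inductive compatibility condition imposes no constraint between distinct $d$-sets. A $(d+1)$-set $W=\{w_0<\dots<w_d\}$ is then directed precisely when its facet signs alternate, that is, $\sigma(W\setminus\{w_i\})=\delta\,(-1)^i$ for some fixed $\delta\in\{\pm1\}$ and all $i$. The guiding device for the cross-block rule is the \emph{first difference}: for a $d$-set $F$ look at the first coordinate $j^{*}$ at which its vertices are not all equal, and orient $F$ according to the values taken there. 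When those $d$ values are distinct there is a unique missing value $v\in[d+1]$, and setting $\sigma(F)=(-1)^{v-1}$ makes every \emph{transversal} $(d+1)$-set (one whose vertices take all $d+1$ values at their first-difference coordinate) directed.

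Let $D_m$ denote the number of directed $d$-simplices of $T_d^{(m)}$, and write $N=(d+1)^{m-1}$. The recursion yields $D_m=(d+1)D_{m-1}+E_m$, where $E_m$ counts the directed $(d+1)$-sets whose vertices are spread over at least two blocks. A routine averaging argument shows that the limiting density $\lim_m D_m/\binom{n}{d+1}$ equals the limiting proportion of directed simplices among these cross-block sets, so it suffices to drive that proportion to $\frac{1}{1+\binom{d+1}{2}}$; the stated bound, with its $o(n^{d+1})$ error, then follows by solving the geometric recurrence. A direct count shows that making \emph{only} the transversal sets directed gives density $\frac{d!}{(d+1)^d-1}$, which happens to equal $\frac14$ when $d=2$ but falls short of $\frac{1}{1+\binom{d+1}{2}}$ once $d\ge3$; so for general $d$ the rule must be refined to capture further directed simplices.

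The main obstacle is precisely this refinement. The extra directed simplices must come from cross-block sets in which a block is used more than once; the smallest case is a ``doubled'' set, having two vertices $u_1<u_2$ that share a value at the first-difference coordinate (hence occupy adjacent positions in $W$) together with $d-1$ singletons. For such a set, two of the facets are transversal with the same value-set, so the crude rule assigns them equal signs, whereas directedness demands opposite signs. The cure is to let $\sigma$ on a transversal $d$-set depend, through a recursively defined twist, on the deeper coordinates of its vertices, so that $u_1$ and $u_2$ are separated while the plain transversal case is preserved. The heart of the argument is to design this twist and then to characterise and count exactly which repeated-block $(d+1)$-sets become directed -- a count that couples back to the recursion, since the directedness of a doubled set is governed by the intra-block ordering of its repeated pair. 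Carrying this out, and verifying that the cross-block density converges to $\frac{1}{1+\binom{d+1}{2}}$, is the crux; once it is in place, substituting into $D_m=(d+1)D_{m-1}+E_m$ and summing the resulting geometric series gives the claimed lower bound, and hence $c_d\ge\frac{1}{1+\binom{d+1}{2}}$.
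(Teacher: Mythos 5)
Your proposal has a genuine gap, and you name it yourself: the ``twist'' that is supposed to make the repeated-block $(d+1)$-sets directed is never defined, and the characterisation and count of which such sets become directed is never carried out. Everything up to that point is sound --- the encoding of orientations as signs relative to a reference order, the alternating-sign criterion for directedness, the observation that the plain first-difference rule makes exactly the transversal $(d+1)$-sets directed, and the resulting density $\frac{d!}{(d+1)^d-1}$, which equals $\frac{1}{4}$ for $d=2$ but falls short of $\frac{1}{1+\binom{d+1}{2}}$ for $d\geq 3$. But what you defer (``design this twist \ldots is the crux'') is precisely the heart of the theorem, so the claimed bound is not established.

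For comparison, the paper resolves the obstacle not by twisting signs of transversal $d$-sets but by changing which coordinate governs the orientation: a $d$-set $F$ is oriented according to the \emph{first coordinate at which its $d$ values are pairwise distinct} (call it $j_F$), using a fixed directed simplex $D$ on $\{1,\ldots,d+1\}$ as reference; any coordinate at which two vertices of $F$ agree is simply skipped, regardless of how the vertices are spread among blocks. With this rule, a $(d+1)$-set $B$ is directed if and only if its first all-distinct coordinate $k_B$ exists and every coordinate $k<k_B$ shows at most $d-1$ distinct values among the $d+1$ vertices: the latter condition forces every facet $F$ of $B$ to have $j_F=k_B$ (no facet is oriented ``prematurely''), and at $k_B$ the facets inherit the pairwise compatible orientations of $D$. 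Writing $x=\frac{(d+1)!}{(d+1)^{d+1}}$ for the probability that a given coordinate of $d+1$ random vectors is all-distinct and $y$ for the probability that it has at most $d-1$ distinct values, the density of directed simplices is $x(1+y+\cdots+y^{m-1})\rightarrow\frac{x}{1-y}=\frac{1}{1+\binom{d+1}{2}}$, since $1-y=x\left(1+\binom{d+1}{2}\right)$. In particular your block-recursive bookkeeping $D_m=(d+1)D_{m-1}+E_m$ becomes unnecessary: a single geometric series over coordinates does all the counting. If you want to salvage your plan, the paper's rule is in effect the ``twist'' you were looking for --- rather than separating a doubled pair via deeper coordinates, it makes every coordinate containing a repetition invisible to the orientation rule, which is exactly what renders the directed $(d+1)$-sets characterisable and countable.
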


\begin{proof}
Let $D=\{1,2,\ldots,d+1\}$ be a fixed directed $d$-simplex: thus the $d$-sets of $D$ are oriented in a pairwise compatible way. 
\\\\
Let the vertex set of $T_d$ be $\{\mathbf{a}=(a_1,a_2,\ldots,a_m): 1 \leq a_i \leq d+1 \text{ for all }1 \leq i \leq m\}$.
Given a set of $d$ vertices $F=\{\mathbf{a}^{(1)},\mathbf{a}^{(2)},\ldots,\mathbf{a}^{(d)}\}$, set 
\[
j_F=\min \{1\leq j \leq m:a_j^{(1)},a_j^{(2)},\ldots,a_j^{(d)} \text{ all distinct}\} \quad \text{if it exists.}
\]
If $j_F$ exists, orient $F$ as the $d$-set $\{a_{j_F}^{(1)},a_{j_F}^{(2)},\ldots,a_{j_F}^{(d)}\}$ is oriented in $D$. If $j_F$ does not exist, orient $F$ in any way. Note that for a randomly chosen set of $d$ vertices $F$, we have $\mathbb{P}(j_F \text{ exists})\rightarrow 1$ as $m \rightarrow \infty$. That is to say, almost all $d$-sets get orientations according to the orientations of $d$-sets in $D$.
\\\\
Now we claim that $T_d$ has many directed $d$-simplices.
\\\\
Given a $d$-simplex $B=\{\mathbf{b}^{(1)},\mathbf{b}^{(2)},\ldots,\mathbf{b}^{(d+1)}\}$, set
\[
k_B=\min \{1 \leq k \leq m:b_k^{(1)},b_k^{(2)},\ldots,b_k^{(d+1)} \text{ all distinct}\} \quad \text{if it exists.}
\]
Then $B$ is a directed $d$-simplex if and only if $k_B$ exists and
\[
\left| \{b_k^{(1)}\} \cup \{b_k^{(2)}\} \cup \ldots \cup \{b_k^{(d+1)}\}\right|\leq d-1 \quad \text{for all }k < k_B.
\]
Let
\begin{align*}
             x&=\mathbb{P}(\text{a fixed coordinate of $d+1$ vectors has exactly $d+1$ distinct values})\\
              &=\frac{(d+1)!}{(d+1)^{d+1}},
\end{align*}
and
\begin{align*}
             y&=1-x-\mathbb{P}(\text{a fixed coordinate of $d+1$ vectors has exactly $d$ distinct values})\\
              &=1-\frac{(d+1)!}{(d+1)^{d+1}}-\frac{(d+1)d\binom{d+1}{2}(d-1)!}{(d+1)^{d+1}}\\
              &=1-\frac{(d+1)!}{(d+1)^{d+1}}\left(1+\binom{d+1}{2}\right).
\end{align*}
Now, choosing a random set of $d+1$ vectors is (as $m \rightarrow \infty$) asymptotically the same as choosing $d+1$ vectors with repetitions allowed. For the latter, the probability that these $d+1$ vectors form a directed $d$-simplex is exactly $x+xy+xy^2+\ldots +xy^{m-1}$. Thus, the fraction of directed $d$-simplices tends to $\frac{x}{1-y} = \frac{1}{1+\binom{d+1}{2}}$ as $m$ tends to infinity. The number of directed $d$-simplices in $T_d$ is therefore 
\[
\frac{1}{1+\binom{d+1}{2}}\binom{n}{d+1}-o(n^{d+1}).
\]
\end{proof}

\section{Remarks and Questions}

From Theorem~\ref{upperbound} and Theorem~\ref{lowerbound}, we have $\frac{1}{11} \leq c_4 \leq \frac{1}{6}$. Using a similar idea as the case $d=3$, that is, by inducing from a random 2-tournament, we can show that there is a 4-tournament on $n$ vertices which has at least $\frac{9}{64}\binom{n}{5}$ directed 5-simplices. (This is a messy case analysis, as there are many possible ways to induce a 4-tournament from a 2-tournament). Hence we have a better lower bound of $c_4\geq \frac{9}{64}$. However, as $d$ increases, the number of 2-tournaments on $d$ (or $d+1$) vertices increases rapidly. This makes the problem of finding the optimal way, or indeed any sensible way, of inducing a $d$-tournament from a 2-tournament hard. One could also try adapting this method to induce, for example, a 4-tournament from a 3-tournament, but unfortunately this seems no better than inducing a 4-tournament from a 2-tournament.
\\\\
Section 2 and the discussion above tell us that our construction in Theorem~\ref{lowerbound} for the lower bound of $c_d$ is not the best for the cases $d=3$ and $d=4$.

\begin{qs}
Is the construction of a $d$-tournament in Theorem~\ref{lowerbound} optimal, or nearly optimal, for $d\geq 5$?
\end{qs}

\noindent We have mentioned in the remark after Theorem~\ref{upperbound} that for our upper bound of $c_d$ to be tight, there must exist a very structured $d$-tournament. We believe the existence of such a $d$-tournament is fairly unlikely.

\begin{qs}
Can one improve the upper bound for $c_d$, for $d\geq 4$? What is the growth speed of $c_d$?
\end{qs}

\noindent Finally, we would like to see an example of an explicit 3-tournament attaining the exact value of $c_3$.

\begin{qs}
Is there a simple construction of a 3-tournament on $n$ vertices that has at least $\frac{1}{4} \binom{n}{4}+O(n^3)$ directed 3-simplices?
\end{qs}
\noindent By ``simple'', we mean not allowing a random construction or a quasi-random construction (such as a quadratic residue tournament).

\end{document}